\newtheorem{theorem}{Theorem}
\newtheorem{definition}{Definition}
\begin{document}

\title{Record values in appending and prepending bitstrings to runs of binary digits}
\date{October 5, 2018}
\author{Chai Wah Wu\\ IBM Research AI\\IBM T. J. Watson Research Center\\ P. O. Box 218, Yorktown Heights, New York 10598, USA\\e-mail: chaiwahwu@ieee.org}
\maketitle

\begin{abstract}
In this short note, we show a simple characterization of integers that reach records for a sequence described by adding binary strings to runs of 1's and 0's in a binary representation. In particular, we show that this set does not depend on the added strings as long as they are nonempty and of the same length.
\end{abstract}

\section{Introduction}\label{sec:intro}
The sequence A175046 in the Online Encyclopedia of Integer Sequences (OEIS) \cite{oeis} is described as follows. For each integer $n$, take the runs of $1$'s in a binary representation of $n$ and append $1$ to them and take runs of $0$'s and append a $0$ to them, and convert the resulting binary string back into an integer $a(n)$. For example, for $n = 89$, the binary representation is $1011001$, and appending each runs of $0$'s and $1$'s with $0$ and $1$ respectively resulted in $110011100011$ which implies $a(89) = 3299$. Neil Sloane conjectured and Maximilian Hasler proved that $a(n)\leq \frac{9n^2+12n}{5}$, with equality if and only if $n = \frac{2}{3}(4^k-1)$, i.e. $n$ is $1010\dots 10$ in binary.  Neil Sloane also conjectured that the record values of $a(n)$\footnote{i.e. values $a(n)$ such that $a(m) < a(n)$ for all $m < n$.} (OEIS sequences A319422, A319424) are described as having a binary representation that is either a
alternating sequence of $11$ and $00$ or having a single $00$ replaced by $000$. 
In this short note we show that this conjecture is true. In fact, we show that the set of the indices $n$ of the record values is independent of the binary strings that are appended (or prepended) to the runs.

\section{Notation} \label{sec:count}
For an integer $n$, let $b(n)$ and $c(n)$ be the number of runs of $1$'s and $0$'s and the number of bits in the binary representation of $n$ respectively. 
For an integer $n$, let $B(n)$ denote the bitstring of its binary representation.
\begin{definition}
Given two binary strings $d_0$ and $d_1$, let $f(n,d_0,d_1)$ be defined as the number whose binary representation is constructed by appending $d_1$ to each run of $1$'s and $d_0$ to each run of $0$'s. Similarly $g(n,d_0,d_1)$ is defined by prepending $d_i$ rather than appending. By abuse of notation, we also apply this to the binary representation of $n$, i.e.  $f(B(n),d_0,d_1) = B(f(n,d_0,d_1))$. \label{def:one}
\end{definition}
We will omit the arguments $d_0$ and $d_1$ in $f(n,d_0,d_1)$ and $g(n,d_0,d_1)$ when they are clear from context. 
Consider the special case of $f$ when $d_0 = 0$ and $d_1 = 1$ (OEIS sequence A156064). A left inverse of $f$ (OEIS sequence A318921) in this case is described in \cite{lenormand:2003}.
It is easy to see that $b(f(n,0,1)) = b(n)$, $c(f(n,0,1)) = b(n)+c(n)$, $g(n,0,1) = f(n,0,1)$ and $g(n,1,0) = \left\lfloor f(n,0,1)/2\right\rfloor$.

If $d_0$ and $d_1$ are both length $k$ bitstrings, then 
\begin{eqnarray}
c(f(n,d_0,d_1)) &=& b(n)k+c(n) \label{eqn:one} \\
c(g(n,d_0,d_1)) &=& b(n)k + c(n) - l \label{eqn:two}\\
B(g(n,d_1,d_0)) &=& d_0B\left(\left\lfloor f(n,d_0,d_1)/2^k\right\rfloor\right) \label{eqn:three}
\end{eqnarray}
where $l$ is the number of leading zeros of $d_1$. A consequence of Eq. (\ref{eqn:three}) is that if $d_0$ does not contain $1$'s, then 
$g(n,d_1,d_0) = \left\lfloor f(n,d_0,d_1)/2^k\right\rfloor$.

For a binary bitstring $x$, we denote $\overline{x}$ as the bitstring where the $0$'s are change to $1$'s and vice verse.
In other words $\overline{x}$ is the $1$'s complement of $x$.

\section{A characterization of record values of $f$ and $g$}
\begin{theorem} \label{thm:one}
Let $S_f$ be the set $\{n: \forall m < n \quad f(m,d_0,d_1) < f(n, d_0, d_1) \}$ where $d_1$ and $d_0$ are nonempty bitstrings of the same length.
Then $n \in S_f$ if and only if the binary representation of $n$ is either an alternating sequence of $0$'s and $1$'s or an alternating sequence of $0$'s and $1$'s where exactly one of the $0$ is replaced with $00$.
\end{theorem}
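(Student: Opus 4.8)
The plan is to reduce the comparison of values $f(m)$ and $f(n)$ to a comparison of the lengths of their binary representations, and to break the remaining ties by a single, sign-definite ``first-difference'' computation. By Eq.~(\ref{eqn:one}) the output $f(n)$ has $\phi(n):=b(n)k+c(n)$ bits, where $k$ is the common length of $d_0,d_1$; since any binary number with strictly more bits is strictly larger, I first record the elementary fact that $\phi(m)<\phi(n)$ implies $f(m)<f(n)$. Because $\phi$ depends only on $b(n),c(n),k$ and not on the actual strings $d_0,d_1$, this already explains why $S_f$ can be independent of $d_0,d_1$: the only place the strings can intervene is in breaking ties $\phi(m)=\phi(n)$.

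Next I would dispose of the alternating numbers. Among all $m<2^{c}$ one has $b(m)\le c(m)\le c$, with $b(m)=c(m)=c$ forcing $m$ to be the alternating $c$-bit string; hence $\phi$ is uniquely maximised over $[1,2^{c})$ by $\mathrm{alt}(c)$, the alternating number with $c$ bits. By the elementary fact above, $f(\mathrm{alt}(c))$ is then the strict maximum of $f$ on $[1,2^{c})$, so every alternating number is a record, and moreover every $c$-bit $n>\mathrm{alt}(c)$ satisfies $f(n)<f(\mathrm{alt}(c))$ with $\mathrm{alt}(c)<n$, so such an $n$ is not a record.

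The heart of the argument concerns the numbers with $b(n)=c-1$, i.e.\ those with exactly one run of length two. These split into the ``$00$-type'' (the doubled run is a run of $0$'s, which are the claimed extra records) and the ``$11$-type''. Comparing leading bits shows every $00$-type number is $<\mathrm{alt}(c)<$ every $11$-type number; combined with $\phi(\mathrm{alt}(c))>\phi$ of any one-doubled number, this rules out every $11$-type number as a record. To show each $00$-type number $d$ is a record, note that any $m<d$ with $\phi(m)\ge\phi(d)$ must itself be $00$-type (it can be neither $\mathrm{alt}(c)$ nor $11$-type, both exceeding $d$), so everything reduces to the key monotonicity claim: along the $00$-type numbers of a fixed length, $f$ increases with $n$. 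I would prove this for neighbouring $00$-type numbers (the doubled $0$ moved to the next available position) by expanding both strings and comparing them bit by bit from the most significant end: they agree through the common prefix, and at the first disagreement the number whose following run of $1$'s arrives sooner carries a $1$ where the other carries a $0$. The crucial and delicate point is that this first disagreement is \emph{always} of this sign regardless of $d_0,d_1$ — whether it falls inside the copy of $d_0$ or at the subsequent $1$ — which is exactly why the tie-breaking, and hence $S_f$, is insensitive to the appended strings even though generic equal-length comparisons are not.

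Finally I would treat the remaining non-records, namely $n$ with $b(n)\le c-2$ lying below $\mathrm{alt}(c)$. Such an $n$ first dips below $\mathrm{alt}(c)$ at an odd position, which forces a ``$00$'' there and hence $n=(10)^{a}0\cdots$ with $a\ge1$; I would then locate a smaller member of the claimed set with at least as large $\phi$. If $n$ exceeds the $00$-type number $d_{a}$ sharing this initial ``$00$'', then $d_{a}<n$ has strictly larger $\phi$ and wins; otherwise $n$ has an even longer leading run of $0$'s and one drops to $d_{a-1}$, or, in the borderline case $a=1$ (where $n=1000\cdots$), to $\mathrm{alt}(c-1)$. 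A short computation gives $\phi(\mathrm{alt}(c-1))\ge\phi(n)$ here, with equality only when $k=1$ and $b(n)=c-2$; in that lone tie $n$ has the shape $1\,000\,(10)^{\ast}$ and the very same first-difference computation yields $f(\mathrm{alt}(c-1))>f(n)$. I expect this comparison lemma — the sign-definiteness of the first difference, independent of $d_0,d_1$ — to be the main obstacle, since it is the single step in which the structure of the appended strings could a priori intrude but provably does not.
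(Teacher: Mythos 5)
Your proposal is sound, and in one of the two directions it takes a genuinely different route from the paper. For ``every record lies in the claimed set $T$,'' the paper argues by local surgery: for each $n\notin T$ it edits $B(n)$ in place (replacing $x11y$ by $x10\overline{y}$, $x10000y$ by $x01010\overline{y}$, etc.), each edit producing a smaller integer with strictly more runs, hence by Eq.~(\ref{eqn:one}) a longer (larger) image. You instead use members of $T$ themselves as dominating witnesses: after noting that $\phi(n)=b(n)k+c(n)$ is uniquely maximized among $c$-bit integers by the alternating number $\mathrm{alt}(c)$, you dispose of everything above $\mathrm{alt}(c)$ in one stroke, and for $n<\mathrm{alt}(c)$ with $b(n)\le c-2$ you exhibit a smaller element of $T$ (one of $d_a$, $d_{a-1}$, $\mathrm{alt}(c-1)$) with at least as large $\phi$. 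What your route buys is conceptual clarity: it needs no tail-complementation tricks, and it isolates the unique place where $d_0,d_1$ could matter (the ties $\phi(m)=\phi(n)$), making the independence of $S_f$ from the appended strings transparent. What it costs is the boundary analysis at $a=1$, including the $k=1$ tie against $\mathrm{alt}(c-1)$, which the paper's surgery never encounters. The other direction ($T\subset S_f$) is essentially the paper's argument: both reduce to comparing two $00$-type numbers of equal length, and both settle it with the same sign-definite first-difference computation ($d_01>0d_0$ as $(k+1)$-bit windows, whether the discrepancy falls inside $d_0$ or just after it).

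Two of your case descriptions are inaccurate, although the remedies you invoke still work. First, when $a\ge 2$ and $n<d_a$, it is not true that $n$ must have a longer leading run of zeros: take $n=10100100$ and $d_2=10100101$, where the difference occurs well past the shared $00$. This is harmless because $d_{a-1}<n$ holds for \emph{every} $n$ beginning with $(10)^a0$ (compare position $2a-1$), so no structural claim about $n$ is needed. Second, in the tie case $k=1$, $b(n)=c-2$, $a=1$, $n<d_1$, the integer $n$ need not have the shape $1000(10)^{*}$: it can begin $100$ and carry a second doubled run later, e.g.\ $n=10010010<d_1=10010101$. Again the argument survives, since your first-difference comparison of $f(\mathrm{alt}(c-1))$ against $f(n)$ uses only the prefix $100$ of $B(n)$: in all tie cases $f(n)$ begins $1d_100$ while $f(\mathrm{alt}(c-1))$ begins $1d_10d_01$, and the lengths agree, so $f(\mathrm{alt}(c-1))>f(n)$. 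In a full write-up you should phrase that computation so it covers every tie case rather than only the displayed shape.
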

\begin{proof}
Let $T$ be the set of binary sequences of alternating $0$'s and $1$'s plus sequences of alternating $0$'s and $1$'s where exactly one of the $0$ is replaced with $00$. First we show that if both $d_0$ and $d_1$ are nonempty bitstrings, then  $S_f \subset T$.
Consider $n\in S_f$ such that $B(n)$ contains $2$ consecutive $1$'s, i.e.
$B(n) = x11y$. Note that $x$ could be the empty bitstring. 
Then $m$ with $B(m) = x10\overline{y}$ clearly satisfy $m<n$.
Both $m$ and $n$ has the same number of bits, but $b(m) = b(n)+1$, so $c(f(m)) > c(f(n))$ and thus $f(m) > f(n)$. This implies that sequences in $S_f$ are {\em Fibbinary} numbers (OEIS sequence A003714).

Next suppose  that $B(n)$ contains $4$ or more consecutive $0$'s, i.e. $B(n) = x10000y$. Consider $m < n$ with $B(m)  = x01010\overline{y}$. Note that because of the above, $n\in S_f$ implies that $x$ is either the empty string or ends in $0$. Then $b(m) = b(n)+2$ and
$c(m) \geq c(n)-1$ and by Eq. (\ref{eqn:one}), this implies that $f(m) > f(n)$. 
Now suppose that $B(n)$ contains $3$ consecutive $0$'s, i.e. $B(n) = x1000y$ where $y$ does not start with $0$, i.e. $y$ is empty or starts with $1$. Consider $m < n$ with $B(m)  = x0101\overline{y}$. It is easy to see that $b(m) = b(n) + 1$. If $x$ is not the empty bitstring, then $c(m) = c(n)$ so again $c(f(m)) > c(f(n))$ and  $f(m) > f(n)$. If $x$ is the empty bitstring, then $c(m) = c(n)-1$ 
and $B(f(n)) = 1d_1000d_0f(y)$, $B(f(m)) = 1d_10d_01d_1f(\overline{y})$. If $d_0$ and $d_1$ have length $k > 1$, then $c(f(m)) > c(f(n))$. If $d_0$ and $d_1$ are both single-bit strings, then
$c(f(m)) = c(f(n))$ and comparing their initial bits shows that $f(m) > f(n)$.
Thus elements of $S_f$ cannot contain $3$ or more $0$'s in its binary representation, i.e. $S_f$ is a subset of the terms of OEIS sequence A003796, in particular it is a subset of the terms in OEIS sequence A086638.

Now suppose $n$ is such that $B(n)$ has two occurrences of $00$'s, i.e. $B(n) = x100y00z$. By the discussion above, $y$ must start and end with a $1$. Consider $m < n$ with $B(m) = x010y01\overline{z}$. It can easily been shown that $b(m) = b(n) + 1$, so again $f(m) > f(n)$ if $x$ is not the empty string.
If $x$ is the empty string then $B(f(n)) = 1d_100d_0f(y)f(00z) = 1d_100d_0...$ and $B(f(m)) = 1d_10d_0f(y)0d_0f(1\overline{z}) = 1d_10d_01...$. Again, $f(m) > f(n)$ if $d_0$ and $d_1$ are of length $k > 1$.
If $d_0$ and $d_1$ are single bits, $c(f(n)) = c(f(m))$ and comparing the initial bits shows that $f(m) > f(n)$.
This shows that $S_f \subset T$.

Next we show that $T\subset S_f$.
Consider an integer $n$ such that $B(n)$ is an alternating sequence of $0$'s and $1$'s. This implies that $b(n) = c(n)$. Consider an integer $m < n$. Clearly, $c(m)\leq c(n)$. Since the alternating sequence of $0$'s and $1$'s is the only sequence such that $b(n) = c(n)$, this means that $b(m) < b(n)$ and thus $f(m) < f(n)$.
Thus $n \in S_f$.
Next suppose that $n$ is an integer such that  $B(n) = x100y$, where $x$ is either empty or is an alternating sequences of $1$'s and $0$'s ending in $0$ and $y$ is either empty or an alternating sequence of $1$'s and $0$'s. Note that $b(n) = c(n) - 1$ and $B(f(n)) = f(x)1d_100d_0f(y)$. Consider $m < n$.  It is clear that $B(m)$ cannot be the alternating string of $1$'s and $0$'s of length $c(n)$. Thus $b(m) < c(n)$, i.e. $b(m) \leq b(n)$. Suppose $c(m) < c(n)$, then by Eq. (\ref{eqn:one})
$c(f(m)) < c(f(n))$, i.e. $f(m) < f(n)$.

Suppose $c(m) = c(n)$. If $b(m) < b(n)$, then again $f(m) < f(n)$ by Eq. (\ref{eqn:one}), so we can assume that $b(m) = b(n)$.
This implies that $B(m)$ is also of the form $x'100y'$, where $x'$ is either empty or is an alternating sequences of $1$'s and $0$'s ending in $0$ and $y'$ is either empty or an alternating sequence of $1$'s and $0$'s. Since $m < n$, the only possibility is that the $00$ of $B(m)$ is to the left of the $00$ in $B(n)$.
This $B(m) = z1001r$ and $B(n) = z1010r'$. Thus $f(m) = f(z)1d_100d_0f(1r)$ and $f(n) = f(z)1d_10d_01d_1f(0r')$. This implies that $f(n)$ and $f(m)$ has the same initial bits
followed by the bits $0d_0$ for $f(m)$ and $d_01$ for $f(n)$ and this combined with the fact that $c(f(n)) = c(f(m))$ implies that $f(n) > f(m)$.
This shows that $T\subset S_f$ and concludes the proof. 
\end{proof}

As a result of Theorem \ref{thm:one}
for $d_0 = 0$ and $d_1=1$, the values of $f(n)$ for $n \in S_f$ is exactly the numbers whose binary representation is an alternating sequences of $11$ and $00$ with at most one of the $00$ replaced with $000$, proving the (second) conjecture stated in Section. \ref{sec:intro}.

Theorem \ref{thm:one} is also valid for the function $g$ that prepends $d_i$ rather than appends $d_i$ to runs of $0$'s and $1$'s.
\begin{theorem} \label{thm:two}
Let $S_g$ be the set $\{n: \forall m < n \quad g(m,d_0,d_1) < f(n, d_0, d_1) \}$ where $d_1$ and $d_0$ are nonempty bitstrings of the same length.
Then $n \in S_g$ if and only if the binary representation of $n$ is either an alternating sequence of $0$'s and $1$'s or an alternating sequence of $0$'s and $1$'s where exactly one of the $0$ is replaced with $00$.
\end{theorem}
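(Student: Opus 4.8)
The plan is to prove $S_g = T$, where $T$ is the set of integers whose binary representation is an alternating sequence of $0$'s and $1$'s, possibly with a single $0$ replaced by $00$, exactly the set appearing in the proof of Theorem \ref{thm:one}. The whole argument would rest on one elementary principle: of two positive integers, the one with the longer binary representation is larger, and ties are broken by the leading bits. So to compare $g(m,d_0,d_1)$ with $f(n,d_0,d_1)$ I would first compare their bit-lengths. By Eq. (\ref{eqn:one}) and Eq. (\ref{eqn:two}), $c(f(n,d_0,d_1)) = b(n)k + c(n)$ and $c(g(m,d_0,d_1)) = b(m)k + c(m) - l = c(f(m,d_0,d_1)) - l$, where $k$ is the common length of $d_0,d_1$ and $l$ is the number of leading zeros of $d_1$. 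Hence $g(m) < f(n)$ holds automatically whenever $c(f(m)) - l < c(f(n))$, it fails automatically whenever $c(f(m)) - l > c(f(n))$, and only in the tie $c(f(m)) = c(f(n)) + l$, where $g(m)$ and $f(n)$ have equal length, is a bit-by-bit comparison required.

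For the inclusion $S_g \subseteq T$, I would take $n \notin T$ and reuse verbatim the witnesses $m < n$ constructed in Theorem \ref{thm:one} (the moves $x11y \to x10\overline{y}$, $x1000y \to x0101\overline{y}$, $x10000y \to x01010\overline{y}$, and the two-$00$ move), each of which raises the run count $b$ and therefore $c(f)$. Whenever the resulting length gap satisfies $c(f(m)) - c(f(n)) > l$, the principle above gives $c(g(m)) > c(f(n))$ and hence $g(m) > f(n)$, so $n \notin S_g$. The only witnesses needing extra care are those whose length gap equals $l$ exactly (these arise when $d_1$ carries many leading zeros, e.g. $d_1 = 0^k$); there $g(m)$ and $f(n)$ tie in length, and I would either compare their leading bits directly or replace $m$ by a witness with a strictly larger jump in $b$.

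For the reverse inclusion $T \subseteq S_g$, fix $n \in T$ and $m < n$. From the proof of Theorem \ref{thm:one} we have $b(m) \le b(n)$, $c(m) \le c(n)$, and $c(f(m)) \le c(f(n))$, with strict length inequality in every case except the ``one-$00$'' boundary, where $B(m)$ and $B(n)$ have the same length and the same number of runs and the $00$ of $B(m)$ lies to the left of the $00$ of $B(n)$. When $c(f(m)) < c(f(n))$ we get $c(g(m)) = c(f(m)) - l \le c(f(n)) - 1 < c(f(n))$, so $g(m) < f(n)$ with no further work. This reduces the entire direction to the single boundary configuration, and moreover only to the sub-case $l = 0$ (otherwise the tie is again broken by length).

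The step I expect to be the main obstacle is precisely this equal-length boundary comparison. Here $f$ appends, so $B(f(n))$ leads with the first run of $1$'s of $n$ followed by $d_1$, whereas $g$ prepends, so $B(g(m))$ leads with $d_1$ followed by the first run of $1$'s of $m$; the block $d_1$ thus sits in different positions on the two sides, and the shared-prefix bookkeeping that made the leading-bit comparisons in Theorem \ref{thm:one} transparent is no longer available. Controlling how the leading copy of $d_1$ in $g(m)$ races against the leading run in $f(n)$ is the delicate heart of the argument, and it is exactly the regime in which the outcome is most sensitive to whether the right-hand side is read as $f$ or as $g$; I would therefore treat these boundary strings one configuration at a time, tracking the first position at which $B(g(m))$ and $B(f(n))$ disagree.
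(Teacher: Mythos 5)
You proved the wrong statement, and the statement you took literally is false. The $f(n,d_0,d_1)$ on the right-hand side of the definition of $S_g$ is a typo for $g(n,d_0,d_1)$: the paper's proof of Theorem \ref{thm:two} compares $g(m)$ with $g(n)$ in every case and describes itself as identical to the proof of Theorem \ref{thm:one} with Eq.~(\ref{eqn:two}) in place of Eq.~(\ref{eqn:one}), and the remark after the theorem ($S_g = S_f$) confirms the intended reading. Your whole framework --- the one-sided length correction $c(g(m)) = c(f(m)) - l$ and the resulting trichotomy --- is an artifact of the typo, and the inclusion $S_g \subseteq T$ you sketch cannot be completed under the literal reading. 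Concretely, take $d_0 = 1$, $d_1 = 0$ (nonempty, same length $k=1$, with $l=1$) and $n = 8$, so $B(n) = 1000 \notin T$. Then $B(f(8)) = 100001$, i.e.\ $f(8) = 33$, while every $m < 8$ has $b(m) \leq 3$ and $c(m) \leq 3$, hence $c(g(m)) \leq 3 + 3 - 1 = 5$ and $g(m) \leq 31 < f(8)$. So $8$ lies in the literal $S_g$, no witness $m$ exists at all, and your proposed repair --- replacing $m$ by a witness with a strictly larger jump in $b$ --- is unavailable. This is precisely the regime you flagged as needing ``extra care'' when $d_1$ has leading zeros; the care required is to recognize that the mixed claim simply fails there.

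Under the intended reading $g(m) < g(n)$, the correction $-l$ from Eq.~(\ref{eqn:two}) appears on both sides of every length comparison and cancels, so the length bookkeeping is verbatim that of Theorem \ref{thm:one}; in particular the obstacle you identified as the ``delicate heart'' --- the block $d_1$ occupying different positions because one side appends while the other prepends --- disappears, since both sides are prepended and their prefixes align. What then remains is exactly what the paper's proof consists of: three explicit equal-length leading-bit comparisons on the prepended expansions, namely $B(n) = 1000y$ against $B(m) = 101\overline{y}$, $B(n) = 100y00z$ against $B(m) = 10y01\overline{z}$, and $B(m) = z1001r$ against $B(n) = z1010r'$, where for instance $g(m) = g(z)d_11d_000d_1\cdots$ is compared with $g(n) = g(z)d_11d_00d_11d_0\cdots$ in the last case. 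Your proposal defers all of these comparisons (``I would treat these boundary strings one configuration at a time, tracking the first position of disagreement''), but they are the entire content of the proof beyond the skeleton inherited from Theorem \ref{thm:one}; so even after correcting the target, the attempt remains a plan rather than a proof.
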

\begin{proof}
The proof is virtually identical to the proof of Theorem \ref{thm:one} and uses Eq. (\ref{eqn:two}) instead of Eq. (\ref{eqn:one}). The main difference is in some of the cases considered. First, for the case where $B(n) = 1000y$ and $B(m) = 101\overline{y}$ where $y$ is the empty string or starts with $1$.
In this case, $B(g(n)) = d_11d_0000g(y)$ and $B(m) = d_11d_00d_11g(\overline{y})$. If $d_0$ and $d_1$ are of length $k>1$, then $c(g(m)) < c(g(n))$ and $g(m) > g(n)$. For $d_0$ and $d_1$ both a single bit, comparing the initial bits of $g(n)$ and $g(m)$ shows that $g(m) > g(n)$. 
Second, for $B(n) = 100y00z$ and $B(m) = 10y01\overline{z}$, 
$B(g(n)) = d_11d_000g(y)d_000... = d_11d_000d_1...  $ and $B(g(m)) = d_11d_00d_11...$. For $d_0$ and $d_1$ a single-bit string, $c(g(n)) = c(g(m))$ and comparing the initial bits of $g(n)$ and $g(m)$ shows that $g(m) > g(n)$.
Third, for the  case where $B(m) = z1001r$ and $B(n) = z1010r'$. In this case,
 $g(m) = g(z)d_11d_000g(1r) =  g(z)d_11d_000d_1\cdots$ and $g(n) = g(z)d_11d_00d_11g(0r')  = g(z)d_11d_00d_11d_0\cdots$. This implies that $g(n)$ and $g(m)$ has the same initial bits followed by $0d_1$ for $g(m)$ and $d_11$ for $g(n)$ which implies that $g(n) > g(m)$.
\end{proof} 

Theorem 1 and 2 imply that when $d_0$ and $d_1$ are nonempty and of the same length, $S_g = S_f$ and is equal to the terms in OEIS sequence A319423.


\begin{thebibliography}{1}


\bibitem[OEIS]{oeis}
The on-line encyclopedia of integer sequences (\url{http://oeis.org/}),
\newblock founded in 1964 by N. J. A. Sloane.

\bibitem[Lenormand, 2003]{lenormand:2003}
C. Lenormand, ``Deux transformations sur les mots'', preprint, 5 pages, Nov. 17, 2003. Available online at: \url{https://oeis.org/A318921/a318921.pdf}



  
\end{thebibliography}
\end{document}